\newtheorem{theorem}{Theorem}[section]
\theoremstyle{definition}
\newtheorem{definition}{Definition}[section]
\theoremstyle{remark}
\newtheorem{remark}{Remark}[section]
\theoremstyle{proposition}
\newtheorem{proposition}{Proposition}[section]
\numberwithin{equation}{section}
\newtheorem{example}{Example}[section]
\theoremstyle{corollary}
\newtheorem{corollary}{Corollary}[section]
\theoremstyle{conjecture}
\begin{document}

\title{The Chern Character in the Simplicial \\
de Rham Complex}
\author{Naoya Suzuki}
\date{}
\maketitle
\begin{abstract}
On the basis of Dupont's work, we exhibit a cocycle in the simplicial de Rham complex
which represents the Chern character. We also prove the related conjecture due to Brylinski. This gives a way to construct a cocycle
in a local truncated complex.
\end{abstract}

\section{Introduction}
\setcounter{equation}{0}
%

It is well-known that there is one-to-one correspondence between the characteristic classes of $G$-bundles and the elements 
in the cohomology ring of the classifying space $BG$.  So it is important to investigate $H^*(BG)$ in research on the
characteristic classes. However, in general $BG$ is not a manifold so we can not
adapt the usual de Rham theory on it. To overcome this problem, a total complex of a double complex ${\Omega}^{*} (NG(*)) $ which is associated to a simplicial manifold $\{ NG(*) \}$ is often used. 
In brief, $\{ NG(*) \}$  is a sequence of manifolds $\{ NG(p) =G^p \}_{ p=0,1, \cdots }$ together
with face operators ${\varepsilon}_{i} : NG(p) \rightarrow NG(p-1)  $ for $ i= 0, \cdots , p $ satisfying relations ${\varepsilon}_{i}{\varepsilon}_{j} ={\varepsilon}_{j-1}{\varepsilon}_{i}$ for $i<j$ (The standard definition also involves 
degeneracy operators but we do not need them here). The cohomology ring of
${\Omega}^{*} (NG(*)) $ is isomorphic to $H^*(BG)$ so we can use this complex as a candidate of the de Rham complex on $BG$.

In  \cite{Dup}, Dupont introduced another double complex $A^{*,*}(NG)$ on $NG$ and showed the cohomology ring of its total complex $A^*(NG)$
is also isomorphic to $H^*(BG)$. Then he used it to construct a homomorphism from $I^* (G)$, the $G$-invariant polynomial ring over Lie algebra $\mathcal{G}$, to $H^*(BG)$ for a classical Lie group $G$.   

The images of this homomorphism in ${\Omega}^{*} (NG(*)) $ are called the Bott-Shulman-Stasheff forms.
The main purpose of this paper is to exhibit these cocycles precisely when they represent the Chern characters.

In addition, we also show that the conjecture due to Brylinski in \cite{Bry} is true. This gives a way to construct a cocycle
in a local truncated complex $[\sigma_{<p}\Omega^{*} _{\rm loc}(NG)]$ whose cohomology class is mapped to the cohomology class of the Bott-Shulman-Stasheff form in a local double complex by a boundary map. His original motivation to introduce these complexes 
and the conjecture is to study the local cohomology group of the gauge group ${\rm Map}(X,G)$ and the Lie algebra cohomology of 
its Lie algebra. Actually, as a special case $X=S^1$, 
he constructed the standard Kac-Moody $2$-cocycle for a loop Lie algebra by using the cocycle in the local truncated complex $[\sigma_{<2}\Omega^{3} _{\rm loc}(NG)]$.

The outline of this paper is as follows. In section 2, we briefly recall the universal Chern-Weil theory due to Dupont.
In section 3, we obtain the  Bott-Shulman-Stasheff form in ${\Omega}^{*} (NG(*)) $ which represents the Chern character ${\rm ch}_p $.
In section 4, we introduce some result about the Chern-Simons forms. In section 5, we prove Brylinski's conjecture.

\section{Review of the universal Chern-Weil Theory}
\setcounter{equation}{0}
In this section we recall the universal Chern-Weil theory following \cite{Dup2}.
For any Lie group $G$, we have simplicial manifolds $NG$, $N \bar{G}$ and simplicial $G$-bundle  $\gamma : N \bar{G} \rightarrow NG$
as follows:\\
\par
$NG(q)  = \overbrace{G \times \cdots \times G }^{q-times}  \ni (h_1 , \cdots , h_q ) :$  \\
face operators \enspace ${\varepsilon}_{i} : NG(q) \rightarrow NG(q-1)  $
$$
{\varepsilon}_{i}(h_1 , \cdots , h_q )=\begin{cases}
(h_2 , \cdots , h_q )  &  i=0 \\
(h_1 , \cdots ,h_i h_{i+1} , \cdots , h_q )  &  i=1 , \cdots , q-1 \\
(h_1 , \cdots , h_{q-1} )  &  i=q.
\end{cases}
$$

\par
\medskip
$N \bar{G} (q) = \overbrace{ G \times \cdots \times G }^{q+1 - times} \ni (g_1 , \cdots , g_{q+1} ) :$ \\
face operators \enspace $ \bar{\varepsilon}_{i} : N \bar{G}(q) \rightarrow N \bar{G}(q-1)  $ 
$$ \bar{{\varepsilon}} _{i} (g_0 , \cdots , g_{q} ) = (g_0 , \cdots , g_{i-1} , g_{i+1}, \cdots , g_{q})  \qquad i=0,1, \cdots ,q. $$

\par
\medskip

We define $\gamma : N \bar{G} \rightarrow NG $ as $ \gamma (g_0 , \cdots , g_{q} ) = (g_0 {g_1}^{-1} , \cdots , g_{q-1} {g_{q}}^{-1} )$.\\
\par
For any simplicial manifold $X = \{ X_* \}$, we can associate a topological space $\parallel X \parallel $ 
called the fat realization. 
Since any $G$-bundle $\pi : E \rightarrow M$ can be realized as a pull-back of the fat realization of $\gamma $,
$\parallel \gamma \parallel$ is the universal bundle $EG \rightarrow BG$  \cite{Seg}. 

Now we construct a double complex associated to a simplicial manifold.

\begin{definition}
For any simplicial manifold $ \{ X_* \}$ with face operators $\{ {\varepsilon}_* \}$, we define a double complex as follows:
$${\Omega}^{p,q} (X) := {\Omega}^{q} (X_p) $$
Derivatives are:
$$ d' := \sum _{i=0} ^{p+1} (-1)^{i} {\varepsilon}_{i} ^{*}  , \qquad  d'' := (-1)^{p} \times {\rm the \enspace exterior \enspace differential \enspace on \enspace }{ \Omega ^*(X_p) }. $$
\end{definition}
\bigskip

For $NG$ and $N \bar{G} $ the following holds \cite{Bot2} \cite{Dup2} \cite{Mos}.

\begin{theorem}
There exist ring isomorphisms 
$$ H({\Omega}^{*} (NG))  \cong  H^{*} (BG ), \qquad  H({\Omega}^{*} (N \bar{G})) \cong H^{*} (EG ). $$
Here ${\Omega}^{*} (NG)$  and  ${\Omega}^{*} (N \bar{G})$  mean the total complexes.
\end{theorem}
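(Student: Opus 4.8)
The plan is to recognize both statements as instances of a single de Rham theorem for simplicial manifolds, and to prove that theorem by comparing two spectral sequences that converge to the cohomology of the fat realization. Write $X$ for either $NG$ or $N\bar{G}$. By \cite{Seg} the realizations are $\parallel NG \parallel = BG$ and $\parallel N\bar{G} \parallel = EG$, so it suffices to produce a natural ring isomorphism
$$ H({\Omega}^{*}(X)) \;\cong\; H^{*}(\parallel X \parallel ; \mathbb{R}) $$
for an arbitrary simplicial manifold $X = \{ X_* \}$, and then to read off the two cases.

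First I would filter the total complex by the simplicial degree $p$ and run the spectral sequence of the double complex, taking the $d''$-cohomology first. Since $d''$ is (up to sign) the exterior derivative on each manifold $X_p$, the ordinary de Rham theorem gives
$$ E_1^{p,q} = H^{q}_{dR}(X_p) \;\cong\; H^{q}(X_p ; \mathbb{R}). $$
The de Rham isomorphism is natural with respect to the smooth face maps ${\varepsilon}_i$, so the induced first differential $d_1 = \sum_i (-1)^{i}\,{\varepsilon}_i^{*}$ is carried to the simplicial coboundary on singular cohomology. Because $q = n-p$ with $0 \le p \le n$ in each total degree $n$, the double complex is first-quadrant and the filtration is bounded in every total degree, so convergence is automatic.

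On the geometric side, the skeletal filtration of $\parallel X \parallel = \coprod_p \Delta^p \times X_p / \!\sim$ produces a spectral sequence whose successive quotients are suspensions of the $(X_p)_+$; this gives $E_1^{p,q} = H^{q}(X_p ; \mathbb{R})$ with first differential again the alternating sum of face-map pullbacks. The two $E_1$-pages therefore coincide as differential bigraded groups, and both spectral sequences converge. By the comparison theorem the induced map on the abutments is an isomorphism, which establishes the additive isomorphism and, specializing $X$, both displayed isomorphisms of the theorem.

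Finally I would promote this to a ring isomorphism, which I expect to be the main obstacle. The total complex is a differential graded algebra under the wedge product in the form direction combined with an Alexander--Whitney/shuffle product in the simplicial direction, and this product is the one induced by the diagonal of $X$; after passing to cohomology it must be matched, through the de Rham map, with the cup product on $\parallel X \parallel$. The delicate point is the bookkeeping of the signs in $d'$ and $d''$ together with the simultaneous identification of the two products, since the comparison above is only asserted at the $E_1$-level. The cleanest way to settle it is to factor the de Rham isomorphism through Dupont's integration map into the complex $A^{*}(X)$ of \cite{Dup2}, which realizes the comparison by an explicit cochain-level quasi-isomorphism and thereby transports the multiplicative structure honestly; multiplicativity then follows from the Eilenberg--Zilber theorem rather than from a mere statement about associated graded objects.
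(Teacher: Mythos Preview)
The paper does not prove this theorem; it simply records it and cites \cite{Bot2}, \cite{Dup2}, \cite{Mos}. Your sketch is essentially the standard spectral-sequence argument carried out in those references, so you are reproducing what the paper defers to rather than diverging from it.

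One point deserves tightening. To invoke the comparison theorem for spectral sequences you need an actual morphism of filtered complexes (or a zig-zag of such), not merely the observation that the two $E_1$-pages happen to coincide as bigraded differential groups. In the additive part of your argument you speak of ``the induced map on the abutments'' without having constructed any map between $\Omega^{*}(X)$ and a complex computing $H^{*}(\parallel X\parallel)$. The remedy is precisely the device you invoke only later for the ring structure: interpose either the double complex of singular cochains $C^{q}_{\mathrm{sing}}(X_p)$ via the level-wise de Rham integration map, or Dupont's complex $A^{*}(X)$ together with $I_{\Delta}$ on one side and restriction to $\parallel X\parallel$ on the other. Either choice furnishes the honest chain-level comparison that the spectral-sequence argument requires. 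Once you recognize that this intermediary is already needed for the additive isomorphism---and not solely for transporting products---your outline is complete and matches the treatment in the cited sources.
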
 
For example, the derivative $d'+d'':\Omega ^{p}(NG) \rightarrow \Omega ^{p+1}(NG)$ is 
given as follows:

$$
\begin{CD}
{\Omega}^{p} (G ) \\
@AA{-d}A \\
{\Omega}^{p-1} (G )@>{{\varepsilon}_{0} ^{*} - {\varepsilon}_{1} ^{*} +{\varepsilon}_{2} ^{*} }>>{\Omega}^{p-1} (NG(2))\\
@.@AAdA\\
@.{\Omega}^{p-2} (NG(2))\\
@.@. \ddots \\
@.@.@.{\Omega}^{1} (NG(p)) \\
@.@.@.@AA{(-1)^p d }A\\
@.@.@.{\Omega}^{0} (NG(p))@>{ \sum _{i=0} ^{p+1} (-1)^{i} {\varepsilon}_{i} ^{*}}>> {\Omega}^{0} (NG(p+1)) 
\end{CD}
$$

\bigskip
\begin{remark}
Let $\pi : P \rightarrow M$ be a principal $G$-bundle and $\{ g_{\alpha \beta}:U_{\alpha \beta} \rightarrow G \}$ be 
the transition functions of it. Then we can pull-back the cocycle in $\Omega ^*(NG)$ to the \v{C}ech-de Rham complex of 
$M$ by $\{g_{\alpha \beta} \}$. When $\kappa$ is the characteristic class which corresponds to the  cocycle in $\Omega ^*(NG)$,
the image of $ g_{\alpha \beta} ^*$ in $H^* _{\check{C}ech-de Rham} (M)$ is the characteristic class $\kappa (P)$ of $\pi : P \rightarrow M$. 
For more details, see for instance \cite{Mos}.
\end{remark}

\bigskip

There is another double complex associated to a simplicial manifold.
\begin{definition}[\cite{Dup}]
A simplicial $n$-form on a simplicial manifold $ \{ {X}_{p} \} $ is a sequence $ \{ {\phi}^{(p)} \}$
of $n$-forms ${\phi}^{(p)}$ on ${\Delta}^{p} \times {X}_{p} $ such that
$${({\varepsilon}^{i} \times id )}^{*} {\phi}^{(p)} = {(id \times {\varepsilon}_{i} )}^{*} {\phi}^{(p-1)}. $$
Here ${\varepsilon}^{i}$ is the canonical $i$-th face operator of ${\Delta}^{p}$.\\
\end{definition}

{\rm Let}  \thinspace $A^{k,l} (X)$ be the set of all simplicial $(k+l)$-forms on ${\Delta}^{p} \times {X}_{p} $ which are expressed locally 
of the form
$$ \sum { a_{ i_1 \cdots i_k j_1 \cdots j_l } (dt_{i_1 } \wedge \cdots \wedge dt_{i_k } \wedge dx_{j_1 } \wedge \cdots \wedge dx_{j_l })}$$
where $(t_0, t_1, \cdots, t_p)$ are the barycentric coordinates in ${\Delta}^{p}  $ and $x_j $ are the local coordinates in $ {X}_{p} $.
We call these forms $(k,l)$-form on ${\Delta}^{p} \times {X}_{p} $ and define derivatives as:
$$ d' := {\rm  the \enspace exterior \enspace differential \enspace on \enspace } {\Delta}^{p}  $$
$$ d'' := (-1)^{k} \times {\rm  the \enspace exterior \enspace differential \enspace on \enspace } {X_p }. $$
Then $(A^{k,l} (X) , d' , d'' )$ is a double complex.\\

Let $A^{*} (X)$ denote the total complex of $A^{*,*}(X)$. We define a map $I_{\Delta} : A^{*} (X)  \rightarrow {\Omega}^{*} (X) $ as follows: 
$$ I_{ \Delta }( \alpha ) :=  \int_{{\Delta }^{p}} ( { \alpha } |_{{ \Delta }^{p} \times {X}_{p} } ). $$
Then the following theorem holds \cite{Dup}.

\begin{theorem}
$ I_{ \Delta } $ { induces a natural ring isomorphism}
$$ I_{ \Delta } ^{*} : H( A^{*} (X)) \cong H({\Omega}^{*} (X)). $$

\end{theorem}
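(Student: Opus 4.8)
The plan is to establish, in order, that $I_{\Delta}$ is a cochain map, that it is a quasi-isomorphism, and that the induced map $I_{\Delta}^{*}$ respects products. For the first point, note that the piece of a simplicial $n$-form $\alpha$ that survives $\int_{\Delta^p}$ on the $p$-th level is its component of $dt$-degree exactly $p$, which is an element of $\Omega^{n-p}(X_p)$; hence $I_{\Delta}$ preserves total degree and sends the bidegree $(p,\,n-p)$ on the $A$-side to $\Omega^{p,\,n-p}(X)$. The de Rham part $d''$ commutes with integration over $\Delta^p$, and the signs $(-1)^{k}$ and $(-1)^{p}$ in the two definitions are exactly what is needed to match it. The simplicial part is the substance of the computation: by Stokes' theorem $\int_{\Delta^p} d'\alpha^{(p)} = \pm\int_{\partial\Delta^p}\alpha^{(p)}$, and the restriction of $\alpha^{(p)}$ to the $i$-th face of $\Delta^p$ is controlled by the compatibility relation $(\varepsilon^{i}\times id)^{*}\phi^{(p)}=(id\times\varepsilon_{i})^{*}\phi^{(p-1)}$ in the definition of a simplicial form. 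Summing over faces with the orientation signs reproduces precisely the operator $\sum_{i}(-1)^{i}\varepsilon_{i}^{*}$, i.e.\ $d'$ on $\Omega^{*}(X)$. Reconciling all the signs is the one delicate bookkeeping step here.

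Second, to prove $I_{\Delta}$ is a quasi-isomorphism I would filter the total complex $A^{*}(X)$ by $dt$-degree and the total complex $\Omega^{*}(X)$ by simplicial degree $p$. As computed above, $I_{\Delta}$ carries $A^{k,l}$ into $\Omega^{k,l}(X)=\Omega^{l}(X_k)$, so it is filtration-preserving and induces a morphism of the two associated spectral sequences. On the $E_0$-level the differential is $d''$, and $I_{\Delta}$ is fibrewise integration over $\Delta^{k}$, a $d''$-chain map. The key analytic input is then that integration along the contractible simplex $\Delta^{k}$ is a quasi-isomorphism onto $\Omega^{*}(X_k)$: this is the Poincaré lemma for $\Delta^{k}$ made fibrewise, with a chain homotopy inverse furnished by the standard contracting homotopy on the simplex. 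Thus $I_{\Delta}$ is an isomorphism on $E_{1}$, and the comparison theorem for these (first-quadrant-type) spectral sequences upgrades this to the isomorphism $I_{\Delta}^{*}$ on total cohomology. I expect the identification of the $E_{1}$-terms to require care, because the simplicial compatibility condition ties the levels together and one must check that the fibre integration is genuinely compatible with this coherence; but it does not obstruct the argument.

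Finally, and this is the step I expect to be the main obstacle, I would prove that $I_{\Delta}^{*}$ is multiplicative. The point is that $I_{\Delta}$ is \emph{not} a ring homomorphism on the cochain level: the wedge product makes $A^{*}(X)$ a commutative differential graded algebra, whereas integration over the simplex is not multiplicative, so $I_{\Delta}(\alpha\wedge\beta)$ and the product $I_{\Delta}(\alpha)\cdot I_{\Delta}(\beta)$ in $\Omega^{*}(X)$ differ by a genuine defect. To control this I would construct an explicit cochain homotopy $H$ satisfying a Leibniz-type identity
$$ I_{\Delta}(\alpha\wedge\beta)-I_{\Delta}(\alpha)\cdot I_{\Delta}(\beta) = (d'+d'')H(\alpha,\beta)\pm H\bigl((d'+d'')\alpha,\beta\bigr)\pm H\bigl(\alpha,(d'+d'')\beta\bigr), $$
built from the combinatorial decomposition of $\Delta^{p}\times\Delta^{q}$ into $(p+q)$-simplices indexed by shuffles (the Eilenberg--Zilber mechanism) together with the same contracting homotopy used above. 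Once such an $H$ is exhibited, the defect vanishes after passing to cohomology, so $I_{\Delta}^{*}$ is a ring homomorphism, and combined with the quasi-isomorphism statement it is a ring isomorphism. Naturality in $X$ is immediate from the naturality of fibre integration and of the face operators.
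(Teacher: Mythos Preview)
The paper does not actually prove this theorem: it is stated with a citation to Dupont \cite{Dup} and no argument is given. Your proposal therefore cannot be compared to a proof in the paper, but it is essentially a correct reconstruction of Dupont's original argument. The Stokes-based verification that $I_{\Delta}$ is a chain map and the Eilenberg--Zilber/shuffle treatment of multiplicativity are exactly the standard ingredients.

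One remark on the quasi-isomorphism step: as you have written it, you filter by $dt$-degree and then invoke the Poincar\'e lemma with respect to $d''$, but the contractibility of $\Delta^{k}$ is a statement about the $d'$-direction, not $d''$. The cleaner route (and the one Dupont actually takes) is to build an explicit chain-homotopy inverse $E:\Omega^{*}(X)\to A^{*}(X)$ out of Whitney's elementary forms on the simplices, check $I_{\Delta}\circ E=\mathrm{id}$, and write down a homotopy $s$ with $E\circ I_{\Delta}-\mathrm{id}=(d'+d'')s+s(d'+d'')$ coming from the standard contraction of $\Delta^{p}$. This bypasses the spectral-sequence bookkeeping and handles the simplicial compatibility across levels directly. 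Your spectral-sequence outline can be made to work as well, but the Poincar\'e-lemma input should be rephrased accordingly.
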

\bigskip

Let  $\mathcal{G}$ denote the Lie algebra of $G$. A connection on a simplicial $G$-bundle $\pi : \{ E_p \} \rightarrow \{ M_p \} $ is a sequence of $1$-forms $\{ \theta \}$ on $\{ E_p \}$ with coefficients $\mathcal{G}$
such that $\theta $ restricted  to ${\Delta}^{p} \times {E}_{p} $ is a usual connection form on a principal $G$-bundle  ${\Delta}^{p} \times {E}_{p} \rightarrow {\Delta}^{p} \times {M}_{p} $.

Dupont constructed a canonical connection $\theta \in A^1 (N \bar{G} )$ on ${\gamma} : N \bar{G} \rightarrow NG $ in the following way:

$$ {\theta } |_{{ \Delta }^{p} \times N \bar{G} (p)} := t_0 {\theta }_0 + \cdots + t_{p} {\theta }_{p}. $$

Here ${\theta }_i $ is defined by ${\theta }_i = {\rm pr}_i ^{*} \bar {\theta } $ where ${\rm pr}_i : { \Delta }^{p} \times N \bar{G} (p) \rightarrow G $ is the projection into the $i$-th factor of $ N \bar{G} (p) $ and $\bar {\theta }$ is the Maurer-Cartan form of $G$.
We also obtain its curvature $\Omega \in A^2 (N \bar{G} )$ on ${\gamma }$ as:
$$ \Omega |_{{ \Delta }^{p} \times N \bar{G} (p) }= d \theta |_{{ \Delta }^{p} \times N \bar{G} (p) }
+  \frac{1}{2} [ \theta |_{{ \Delta }^{p} \times N \bar{G} (p) } , \theta |_{{ \Delta }^{p} \times N \bar{G} (p) } ]. $$

Let  ${\rm  I}^{*} (G)$ denote the ring of $G$-invariant polynomials on $\mathcal{G}$. For $P \in I^* (G)$, we restrict 
$P( \Omega ) \in A^{*} (N \bar{G} )$ to each ${\Delta}^{p} \times N \bar{G} (p) 
\rightarrow {\Delta}^{p} \times NG(p) $ and apply the usual Chern-Weil theory then we have a simplicial $2k$-form  $P( \Omega )$
on $NG$. 

Now we have a canonical homomorphism 
$${w}:{\rm  I}^{*} (G) \rightarrow H({\Omega}^{*} (NG)) $$
which maps $P \in I^* (G)$  to ${w}(P)=[I_{\Delta } ( P({\Omega}) )]$.

\section{The Chern character in the double complex}
\setcounter{equation}{0}
In this section we exhibit a cocycle in $\Omega ^{*,*}(NG) $  which represents the
Chern character.
Throughout this section, $G= GL(n ;  \mathbb{C} )$ and $ {\rm ch }_p$ means the $p$-th Chern character.

Note that the diagram below is commutative, since $I_{\Delta }$ acts only on the differential forms on ${ \Delta }^{*} $,
and so does ${\gamma}^{*}$ on differential forms on each $N G (*)$.
$$
\begin{CD}
A^{*,*}(N \bar{G} )@>{I_{\Delta }}>>{\Omega}^{*,*}(N \bar{G} )\\
@A{\gamma}^{*}AA@AA{\gamma}^{*}A\\
A^{*,*}(N G )@>{I_{\Delta }}>>{\Omega}^{*,*}(N G )
\end{CD}
$$

We first give the cocycle in ${\Omega}^{p+q}(N  \bar{G}(p-q)) (0 \leq q \leq p-1)$ which corresponds to the $p$-th Chern character 
by restricting $ ({1}/{p! } )\thinspace {\rm tr} \left(  \left( { - \Omega }/{2 \pi i } \right) ^p  \right) 
\in A^{2p}(N \bar{G} ) $ to $A^{p-q,p+q} ( {\Delta}^{p-q} \times N \bar{G}(p-q))$ and integrating it along ${\Delta}^{p-q}$.
Then we give the cocycle in ${\Omega}^{p+q}(N  {G}(p-q))$ 
which hits to it by ${\gamma}^{*}$.

Since $[\theta _i , \theta _j ]= \theta _i \wedge \theta _j + \theta _j \wedge \theta _i$ for any $i,j$, 
 $$  \Omega |_{{ \Delta }^{p-q} \times N \bar{G} (p-q) }
 = -\sum _{i=1} ^{p-q}  dt_i \wedge (\theta _0 - \theta _{i} )
-  \sum _{ 0 \leq i < j \leq p-q } t_i t_j ( \theta _i - \theta _j  ) ^{2}.$$

Now
$$ dt_i \wedge (\theta _0 -\theta _{i} ) = dt_i \wedge \{ (\theta _0 -\theta _{1} ) + (\theta _{1} -\theta _{2} ) +
\cdots + (\theta _{i-1} -\theta _{i} ) \} $$

and for any ${\mathcal{G} }$-valued differential forms $ \alpha , \beta , \gamma $ and any integer $ 0 \leq \forall x \leq p-q-1 $, the equation
$  \alpha \wedge (dt_i \wedge ( \theta _x - \theta _{x+1} )) \wedge \beta \wedge (dt_j \wedge ( \theta _x - \theta _{x+1} )) \wedge \gamma  = - \alpha \wedge (dt_j \wedge ( \theta _x - \theta _{x+1} )) \wedge \beta \wedge (dt_i \wedge ( \theta _x - \theta _{x+1} )) \wedge \gamma $ holds,
so the terms of the forms above cancel with each other in $ \left( - \Omega |_{{ \Delta }^{p-q} \times N \bar{G} (p-q) } \right) ^{p} $.
Then we see:

$$ \left( - \Omega |_{{ \Delta }^{p-q} \times N \bar{G} (p-q) } \right) ^{p}
 = \left( \sum _{i=1} ^{p-q}  dt_i \wedge (\theta _{i-1} - \theta _{i} )
+ \sum _{ 0 \leq i < j \leq p-q } t_i t_j ( \theta _i - \theta _j  ) ^{2} \right) ^{p}. $$

Now we obtain the following theorem.
\begin{theorem}
{We set:}
$$ \bar{S}_{p-q}=\sum _{\sigma \in \mathfrak{S} _{p-q-1 } }({\rm sgn} ( \sigma ) ) (\theta _{ \sigma (1)}  - \theta _{ \sigma (1)+1 } )
\cdots (\theta _{ \sigma (p-q-1) }  - \theta _{ \sigma (p-q-1)+1 } )$$
{Then the cocycle in } $ {\Omega}^{p+q} ( N \bar{G} (p-q) ) \ (0 \leq q \leq p-1) $ { which corresponds to the $p$-th Chern character
${\rm ch}_p$ is}

$$ \frac{1}{p! } \left(\frac{1}{2 \pi i } \right)^p (-1)^{(p-q)(p-q-1)/2  } \times \hspace{18em} $$
$$    \mathrm{tr} \sum \left( (p(\theta _0 - \theta _{1})) \wedge \bar{H}_q (\bar{S}_{p-q}) \times  \int _{{\Delta}^{p-q}} \prod_{i<j} (t_i t_j)^{a_{ij}(\bar{H}_q (\bar{S}_{p-q}))} dt_1 \wedge \cdots \wedge dt_{p-q} \right).$$
Here $\bar{H}_q (\bar{S}_{p-q})$ means the terms that $ (\theta _{i} - \theta _{j} )^2 \enspace (1 \leq i < j \leq p-q+1 ) $ { are
put }$q${ -times between }$(\theta _{k-1} - \theta _{k} )$ { and }$(\theta _{l} - \theta _{l+1} )$ { in $ \bar{S}_{p-q}$ permitting overlaps;
 $a_{ij}(\bar{H}_q (\bar{S}_{p-q}))$
means the number of }$(\theta _{i} - \theta _{j} )^2 ${ in it.
$\sum$ means the sum of all such terms}.

\end{theorem}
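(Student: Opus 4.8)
The plan is to compute directly the bidegree $(p-q,p+q)$ component of $\mathrm{ch}_p(\Omega)=\frac{1}{p!}\mathrm{tr}\!\left((-\Omega/2\pi i)^p\right)$ on $\Delta^{p-q}\times N\bar G(p-q)$ and then integrate along $\Delta^{p-q}$, starting from the reduced expression for $\left(-\Omega|_{\Delta^{p-q}\times N\bar G(p-q)}\right)^p$ established just above the theorem. Writing $c_i:=\theta_{i-1}-\theta_i$ and $C_{ij}:=(\theta_i-\theta_j)^2$, that expression is the $p$-th power of $\sum_{i=1}^{p-q} dt_i\wedge c_i+\sum_{0\le i<j\le p-q} t_it_j\,C_{ij}$, a sum of forms each of total degree $2$. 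I would first isolate the component lying in $A^{p-q,p+q}$: since every summand of the first type carries exactly one $dt_i$ and every summand of the second type carries none, a monomial contributes $p-q$ differentials $dt$ precisely when it uses $p-q$ connection terms and $q$ curvature terms; and because the indices of the connection terms run over $\{1,\dots,p-q\}$, the $p-q$ differentials must be $dt_1,\dots,dt_{p-q}$, each once, or else the wedge vanishes. The manifold degree of such a monomial is then $(p-q)+2q=p+q$, as required.

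The decisive simplification is that each factor in the product has even degree, so the trace is cyclically symmetric with no Koszul sign. I would therefore apply cyclic invariance at the level of these degree-$2$ blocks, before any $dt$ is moved: the block $dt_1\wedge(\theta_0-\theta_1)$ occurs exactly once in each surviving monomial, so every cyclic class of monomials has exactly $p$ members and contains a unique representative beginning with this block. Summing over classes produces the overall factor $p$ and lets me pull $p(\theta_0-\theta_1)$ to the front; the remaining factors are an arbitrary shuffle of the $p-q-1$ connection blocks $dt_i\wedge c_i\ (i=2,\dots,p-q)$ and the $q$ curvature blocks.

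Next I would extract the differentials. Moving all $dt$'s to the front to form $dt_1\wedge\cdots\wedge dt_{p-q}$, each $dt$ slides past the curvature blocks with no sign (they are even) and past the connection one-forms $c_i$ with a sign $-1$ each; a short count shows the total is $(-1)^{(p-q)(p-q-1)/2}$, independent of where the curvature blocks sit and of the order of the connection blocks. Re-sorting the $dt$'s into standard order then contributes the sign of the permutation that orders the connection blocks, which is exactly the $\mathrm{sgn}(\sigma)$ appearing in $\bar S_{p-q}$; since $c_1$ has already been fixed in front, the permutation ranges over $\mathfrak S_{p-q-1}$. Collecting the one-form factors thus reproduces $(\theta_0-\theta_1)\wedge\bar H_q(\bar S_{p-q})$, the curvature blocks being inserted in all positions and with repetitions allowed (``permitting overlaps''), while each curvature block $t_it_j C_{ij}$ deposits its scalar coefficient $t_it_j$; a block $(\theta_i-\theta_j)^2$ used $a_{ij}$ times contributes $\prod_{i<j}(t_it_j)^{a_{ij}}$, which is precisely the integrand integrated over $\Delta^{p-q}$.

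The cocycle property itself needs no separate argument: the form is by construction the $(p-q,p+q)$ component of $I_\Delta(\mathrm{ch}_p(\Omega))$, the image under the chain map $I_\Delta$ of the closed Chern--Weil form $\mathrm{ch}_p(\Omega)$, so it automatically assembles into a total cocycle representing $\mathrm{ch}_p$. The main obstacle I anticipate is purely bookkeeping: verifying that the interleaving of curvature blocks among the connection blocks matches the definition of $\bar H_q(\bar S_{p-q})$ term by term and that no spurious multiplicities are introduced, together with a careful check of the index range of the curvature differences $(\theta_i-\theta_j)^2$. The even-degree cyclicity observation is what makes the factor $p$ and the global sign fall out cleanly; the rest is organizing the shuffle sum so that it coincides with the stated combinatorial description.
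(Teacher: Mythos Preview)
Your proposal is correct and follows exactly the route the paper takes: the paper's proof consists of writing down the integral $\int_{\Delta^{p-q}}\frac{1}{p!}\,\mathrm{tr}\bigl((-\Omega/2\pi i)^p\bigr)$ using the reduced expression for $-\Omega$ and then asserting that ``by calculating this equation'' one obtains the stated formula. You have supplied the calculation the paper omits---the bidegree selection, the cyclic-invariance argument yielding the factor $p$, the sign $(-1)^{(p-q)(p-q-1)/2}$, and the identification of the shuffle sum with $\bar H_q(\bar S_{p-q})$---and each step checks out; your flagged concern about the index range of $(\theta_i-\theta_j)^2$ is warranted, since the paper's stated range $1\le i<j\le p-q+1$ is a typo for $0\le i<j\le p-q$.
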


\begin{proof}
The  cocycle in  $ {\Omega}^{p+q} ( N \bar{G} (p-q) ) $  which corresponds to ${\rm ch}_p$ is given by
$$\int_{{\Delta}^{p-q}} \frac{1}{p!} {\rm tr} \left( \left( \frac{- \Omega |_{{ \Delta }^{p-q} \times N \bar{G} (p-q) } }{2 \pi i} \right) ^p \right) \hspace{15em}$$
$$= \frac{1}{p!}  \left(\frac{1}{2 \pi i } \right)^p \int_{{\Delta}^{p-q}}  {\rm tr} \left( \left( \sum _{i=1} ^{p-q}  dt_i \wedge (\theta _{i-1} - \theta _{i} )
+ \sum _{ 0 \leq i < j \leq p-q } t_i t_j ( \theta _i - \theta _j  ) ^{2} \right) ^{p} \right).$$
By calculating this equation, we can check that the statement of Theorem 3.1 is true.
\end{proof}

For the purpose of getting the differential forms in $\Omega ^{*,*}(NG) $ which hit the cocycles in 
Theorem 3.1 by ${\gamma}^{*}$, we set 
$$\varphi_s:=h_1 \cdots h_{s-1}dh_s h^{-1} _s \cdots h^{-1} _1.$$
Here $h_i$ is the $i$-th factor of $NG(*)$.

A straightforward calculation shows that
$${\gamma}^{*} {\rm{tr}} (\varphi_{i_1} \varphi_{i_2}   \cdots \varphi_{i_{p-1}} \varphi_{i_p}  )  =\mathrm{tr}(\theta _{i_1-1} - \theta _{i_1 } ) (\theta _{i_2-1} - \theta _{i_2 } ) \cdots (\theta _{i_p-1} - \theta _{i_p } ).$$

From the above, we conclude:
\begin{theorem}
We set:
$$ R_{ij} = (\varphi_{i}+ \varphi_{i+1} + \cdots + \varphi_{j-1})^{2} \qquad (1 \leq i<j \leq p-q+1 )$$
$$ S_{p-q}   =\sum _{\sigma \in \mathfrak{S} _{p-q-1 } } {\rm sgn} ( \sigma ) \varphi_{ \sigma (1)+1 } \cdots  \varphi_{ \sigma (p-q-1)+1 }.
$$
Then the cocycle in $ \Omega ^{p+q} (NG(p-q)) \ (0 \leq q \leq p-1)$  which represents the $p$-th  Chern character ${\rm ch}_p $ is
$$  \frac{1}{(p-1)! } \left( \frac{1}{2 \pi i } \right)^p (-1)^{(p-q)(p-q-1)/2   } \times  \hspace{17em}$$
$$   \mathrm{tr} \sum \left( \varphi_1 \wedge {H_q} ({S}_{p-q})
 \times \int _{{\Delta}^{p-q}} \prod_{i<j} (t_{i-1} t_{j-1})^{a_{ij}({H_q}({S}))} dt_1 \wedge \cdots \wedge dt_{p-q} \right). $$
Here ${H_q} ({S}_{p-q})$ means the term that 
 $ R_{ij} \enspace (1 \leq i < j \leq p-q+1 ) $  are put $q$ -times between $\psi_k$  and $\psi_l $
 in 
${S}_{p-q} $ permitting overlaps; $a_{ij}({H}_q({S}_{p-q}))$  means the number of 
$R_{ij} $  in it. $\sum$ means the sum of all such terms.
\end{theorem}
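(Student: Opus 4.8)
The plan is to realize the sought form on $NG$ as the descent of the $N\bar G$ cocycle of Theorem 3.1 along the bundle projection $\gamma$, and to certify that descent by the explicit substitution $\varphi_s \mapsto (\theta_{s-1}-\theta_s)$. Concretely, I would take the stated $\varphi$-expression $\Psi \in \Omega^{p+q}(NG(p-q))$ as a candidate, compute $\gamma^{*}\Psi$, and check that it equals the Theorem 3.1 cocycle $\Phi \in \Omega^{p+q}(N\bar G(p-q))$. Since each $\gamma : N\bar G(p-q)=G^{p-q+1}\to NG(p-q)=G^{p-q}$ is the quotient by the diagonal right $G$-action, it is a surjective submersion, so $\gamma^{*}$ is injective on forms. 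The Chern--Weil form $\Phi$ is $\gamma$-basic, hence descends; once $\gamma^{*}\Psi=\Phi$ is established, $\Psi$ is the \emph{unique} form on $NG$ lying over $\Phi$, and therefore represents $\mathrm{ch}_p$ in $H(\Omega^{*}(NG))\cong H^{*}(BG)$.

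The engine of the argument is the trace identity recorded just before the statement, which I would first verify directly. Writing $\bar\theta=g^{-1}dg$ for the (left) Maurer--Cartan form so that $\theta_i=\mathrm{pr}_i^{*}\bar\theta$, and using $\gamma^{*}h_s=g_{s-1}g_s^{-1}$ together with the telescoping $\gamma^{*}(h_1\cdots h_{s-1})=g_0 g_{s-1}^{-1}$, one finds $\gamma^{*}\varphi_s=\mathrm{Ad}(g_0)(\theta_{s-1}-\theta_s)$. The conjugating factor $\mathrm{Ad}(g_0)$ is multiplicative, so in a product $\gamma^{*}(\varphi_{i_1}\cdots\varphi_{i_p})=\mathrm{Ad}(g_0)\big((\theta_{i_1-1}-\theta_{i_1})\cdots(\theta_{i_p-1}-\theta_{i_p})\big)$, and it disappears under the $\mathrm{Ad}$-invariant trace, yielding exactly $\gamma^{*}\mathrm{tr}(\varphi_{i_1}\cdots\varphi_{i_p})=\mathrm{tr}\prod_k(\theta_{i_k-1}-\theta_{i_k})$.

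With the dictionary $\varphi_s\leftrightarrow(\theta_{s-1}-\theta_s)$ in hand, I would match the two cocycles block by block. The leading factor gives $\varphi_1\mapsto(\theta_0-\theta_1)$, and the constants agree because $\tfrac{1}{p!}\cdot p(\theta_0-\theta_1)=\tfrac{1}{(p-1)!}(\theta_0-\theta_1)$ is the image of $\tfrac{1}{(p-1)!}\varphi_1$. The permutation sum translates termwise with the same signs, $S_{p-q}\mapsto\bar S_{p-q}$. For the inserted squares, the telescoping $\varphi_i+\varphi_{i+1}+\cdots+\varphi_{j-1}\mapsto(\theta_{i-1}-\theta_i)+\cdots+(\theta_{j-2}-\theta_{j-1})=\theta_{i-1}-\theta_{j-1}$ shows $R_{ij}\mapsto(\theta_{i-1}-\theta_{j-1})^2$, so each insertion of $R_{ij}$ matches an insertion of a curvature square performed by $\bar H_q$; accordingly the simplex weight $t_{i-1}t_{j-1}$ attached to $R_{ij}$ matches the coefficient $t_a t_b$ of $(\theta_a-\theta_b)^2$ in the curvature, which is why the two barycentric integrals $\int_{\Delta^{p-q}}\prod(t_{i-1}t_{j-1})^{a_{ij}}\,dt$ and $\int_{\Delta^{p-q}}\prod(t_a t_b)^{a_{ab}}\,dt$ agree under the index shift.

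I expect the main obstacle to be the combinatorial bookkeeping of this last step rather than anything conceptual: one must check that the insertion operators $H_q$ and $\bar H_q$ are genuinely in bijection (counting overlaps correctly), that the shift $R_{ij}\leftrightarrow(\theta_{i-1}-\theta_{j-1})^2$ is propagated consistently into the exponents $a_{ij}$ of the simplex integrals, and that the global sign $(-1)^{(p-q)(p-q-1)/2}$ is untouched by the translation, since it arises only from reordering the $dt_i$ and is identical on both sides. Once the monomial bijection is set up, applying the trace identity to each monomial and summing gives $\gamma^{*}\Psi=\Phi$, and injectivity of $\gamma^{*}$ completes the proof.
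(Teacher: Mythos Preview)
Your proposal is correct and is exactly the paper's approach: the paper's proof of Theorem 3.2 consists of the single line ``We can easily check that the cocycle in Theorem 3.2 is mapped to the cochain in Theorem 3.1 by $\gamma^{*}$; the statement follows from this,'' and you have simply unpacked that check in detail, using the trace identity $\gamma^{*}\mathrm{tr}(\varphi_{i_1}\cdots\varphi_{i_p})=\mathrm{tr}\,(\theta_{i_1-1}-\theta_{i_1})\cdots(\theta_{i_p-1}-\theta_{i_p})$ stated just before the theorem and the telescoping $\varphi_i+\cdots+\varphi_{j-1}\mapsto\theta_{i-1}-\theta_{j-1}$. Your explicit justification of the injectivity of $\gamma^{*}$ and of the basicness of $\Phi$ makes rigorous what the paper leaves implicit from the Chern--Weil discussion in Section~2.
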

\begin{proof}
We can easily check that the cocycle in Theorem 3.2 is mapped to the cochain in Theorem 3.1 by $\gamma^* :  \Omega ^{p+q} (NG(p-q)) 
\rightarrow  \Omega ^{p+q} (N \bar{G}(p-q)) $. The statement folllows from this.
\end{proof}
\begin{remark}
The coefficients in theorem 3.2 are calculated using the following famous formula.
$$ \int _{{\Delta}^{r}}t_{0} ^{b_0} t_1 ^{b_1} \cdots t_{r} ^{b_{r}}dt_1 \wedge \cdots \wedge dt_{r} =
\frac{b_0 ! ~ b_1 ! \cdots b_r !}{(b_0 + b_1 + \cdots + b_r +r)!}.
$$
\end{remark}

\begin{corollary}
The cochain $\omega_{p}$ in $ \Omega ^{2p-1} (NG(1)) $ which corresponds to the $p$-th Chern character is given as follows:
$$\omega_{1}= \frac{1}{p! } \left( \frac{1}{2 \pi i } \right)^p \frac{1}{ {}_{2p-1}C_{p-1}}{\rm tr }(h^{-1} dh )^{2p-1}. $$
\end{corollary}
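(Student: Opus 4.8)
The plan is to obtain this corollary as the single special case $q = p-1$ (equivalently $p-q = 1$) of Theorem 3.2, since then $p+q = 2p-1$ and the cochain indeed lives in $\Omega^{2p-1}(NG(1))$. First I would substitute $p-q=1$ into each ingredient of the formula of Theorem 3.2 and verify that the elaborate combinatorial sum $\sum$ degenerates to a single summand, after which only bookkeeping of factorials remains.

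Under the specialization $p-q=1$ the symmetric group $\mathfrak{S}_{p-q-1}=\mathfrak{S}_0$ is trivial and its unique element contributes the empty product, so $S_{1}=1$. The only admissible block is $R_{12}=(\varphi_1)^2=\varphi_1^2$, because the constraint $1\le i<j\le p-q+1=2$ forces $(i,j)=(1,2)$. Inserting $R_{12}$ exactly $q=p-1$ times into the empty word $S_1$ leaves no freedom, so there is one configuration $H_{p-1}(S_1)=\varphi_1^{2(p-1)}$ and the sum $\sum$ reduces to one term. The leading factor is then $\varphi_1\wedge H_{p-1}(S_1)=\varphi_1^{2p-1}$, and the sign satisfies $(-1)^{(p-q)(p-q-1)/2}=(-1)^0=1$.

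Next I would evaluate the simplex integral. The only pair is again $(i,j)=(1,2)$, now with multiplicity $a_{12}=p-1$, so by Remark 3.2 on $\Delta^1$ (with $b_0=b_1=p-1$ and $r=1$) one gets $\int_{\Delta^1}(t_0t_1)^{p-1}\,dt_1=\frac{((p-1)!)^2}{(2p-1)!}$. Multiplying by the external prefactor $\tfrac{1}{(p-1)!}$ yields the scalar $\frac{(p-1)!}{(2p-1)!}$, which coincides with $\frac{1}{p!}\cdot\frac{1}{{}_{2p-1}C_{p-1}}$ upon expanding the binomial coefficient as $\binom{2p-1}{p-1}=\frac{(2p-1)!}{(p-1)!\,p!}$. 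Finally, since for $s=1$ the prefix $h_1\cdots h_{s-1}$ is empty one has $\varphi_1=dh\,h^{-1}$, and the conjugation invariance of the trace gives $\mathrm{tr}(\varphi_1^{2p-1})=\mathrm{tr}((dh\,h^{-1})^{2p-1})=\mathrm{tr}((h^{-1}dh)^{2p-1})$, which is exactly the expression in the statement.

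The argument is essentially mechanical once the substitution is in place; I expect the only genuinely delicate points to be recognizing that the combinatorial sum collapses to a single term (a unique block $R_{12}$ placed into an empty base word) and reconciling the two presentations of the coefficient, namely $\frac{(p-1)!}{(2p-1)!}=\frac{1}{p!\,{}_{2p-1}C_{p-1}}$. The trace identity $\mathrm{tr}((dh\,h^{-1})^{k})=\mathrm{tr}((h^{-1}dh)^{k})$ should be recorded explicitly, since it is what matches the normalization written in the corollary.
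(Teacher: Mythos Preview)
Your proposal is correct and follows precisely the route the paper intends: the corollary is stated immediately after Theorem~3.2 without a separate proof, so it is meant to be obtained by specializing that theorem to $q=p-1$, and your computation carries this out accurately, including the collapse of $S_1$ and $H_{p-1}(S_1)$ to a single term, the evaluation of the $\Delta^1$-integral via Remark~3.1, and the trace identity matching $\varphi_1=dh\,h^{-1}$ to $h^{-1}dh$.
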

\bigskip

\begin{corollary}
The cochain $\omega_{p}$ in $ \Omega ^{p} (NG(p)) $ which corresponds to the $p$-th Chern character is given as follows:
$$\omega_{p}= (-1)^{p(p-1)/2   } \frac{1}{p!(p-1)! } \left( \frac{1}{2 \pi i } \right)^p \mathrm{tr}  \left( \varphi_1 \wedge  \sum _{\sigma \in \mathfrak{S} _{p-1 } } {\rm sgn} ( \sigma ) \varphi_{ \sigma (1)+1 } \cdots  \varphi_{ \sigma (p-1)+1 } \right).$$
\end{corollary}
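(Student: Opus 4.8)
The plan is to obtain this statement as the extreme specialization $q=0$ of Theorem 3.2, followed by an explicit evaluation of the simplex integral. First I would pin down the bidegree: a cochain in $\Omega^{p}(NG(p))$ corresponds, in the indexing of Theorem 3.2, to a cocycle in $\Omega^{p+q}(NG(p-q))$ with $p-q=p$ and $p+q=p$, which forces $q=0$. (This is the opposite endpoint from Corollary 3.1, which is the case $q=p-1$.) Setting $q=0$ everywhere in the formula of Theorem 3.2, I would then track the simplification of each ingredient in turn.

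The key observation is that the operator $H_q$ inserts the quadratic blocks $R_{ij}$ exactly $q$ times into $S_{p-q}$; at $q=0$ no insertion occurs, so $H_0(S_p)=S_p$ and every multiplicity $a_{ij}(H_0(S_p))$ vanishes. Hence the weight $\prod_{i<j}(t_{i-1}t_{j-1})^{a_{ij}}$ reduces to $1$, the outer sum $\sum$ collapses to its single surviving term, and the remaining simplex integral is simply the volume $\int_{\Delta^{p}}dt_1\wedge\cdots\wedge dt_p$. By the formula recorded in Remark 3.2 with $r=p$ and $b_0=\cdots=b_p=0$, this volume equals $1/p!$. Substituting it converts the prefactor $\tfrac{1}{(p-1)!}$ into $\tfrac{1}{p!\,(p-1)!}$, the sign $(-1)^{(p-q)(p-q-1)/2}$ becomes $(-1)^{p(p-1)/2}$, and the surviving differential form is $\varphi_1\wedge S_p$ with $S_p=\sum_{\sigma\in\mathfrak{S}_{p-1}}\mathrm{sgn}(\sigma)\,\varphi_{\sigma(1)+1}\cdots\varphi_{\sigma(p-1)+1}$; assembling these pieces reproduces the asserted expression for $\omega_p$ verbatim.

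I do not expect a substantive obstacle here, since every combinatorial feature of Theorem 3.2 switches off at $q=0$. The only points meriting a line of care are confirming that the vanishing of all $a_{ij}$ really leaves $\varphi_1\wedge S_p$ as a pure form of degree $p$ on $NG(p)$—each $\varphi_j$ being a $1$-form, so that the $(p-1)$ factors in $S_p$ together with $\varphi_1$ give exactly the claimed form degree and bidegree—and that the standard simplex is normalized so that its volume in barycentric coordinates is $1/p!$ rather than $1$.
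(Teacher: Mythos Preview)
Your proposal is correct and follows exactly the approach implicit in the paper: the corollary is stated without proof precisely because it is the $q=0$ specialization of Theorem~3.2, with the simplex integral reducing to the volume $1/p!$ via the formula in the remark immediately following that theorem. The only minor slip is the label---the integral formula is Remark~3.1, not 3.2---but the argument itself is complete.
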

\bigskip

\begin{example}
The cocycle which represents the second  Chern character ${\rm ch}_2 $ in $ \Omega ^{4} (NG) $ is the sum of the following
$C_{1,3}$ and $C_{2,2}$:
$$
\begin{CD}
0 \\
@AA{d''}A \\
C_{1,3} \in {\Omega}^{3} (G )@>{d'}>>{\Omega}^{3} (NG(2))\\
@.@AA{d''}A\\
@.C_{2,2} \in {\Omega}^{2} (NG(2))@>{d'}>> 0
\end{CD}
$$
$$C_{1,3} = \left( \frac{1}{2 \pi i} \right) ^2  \frac{1}{6}{\rm tr}({h^{-1}dh} )^3 ,
\qquad C_{2,2} = \left( \frac{1}{2 \pi i} \right) ^2  \frac{-1}{2}{\rm tr}( dh_1 dh_2 h_2 ^{-1} h_1 ^{-1} ).$$

\end{example}
\begin{corollary}
The cocycle which represents the second  Chern class ${\rm c}_2 $ in $ \Omega ^{4} (NG) $ is the sum of the following
$c_{1,3}$ and $c_{2,2}$:
$$
\begin{CD}
0 \\
@AA{d''}A \\
c_{1,3} \in {\Omega}^{3} (G )@>{d'}>>{\Omega}^{3} (NG(2))\\
@.@AA{d''}A\\
@.c_{2,2} \in {\Omega}^{2} (NG(2))@>{d'}>> 0
\end{CD}
$$
$$c _{1,3} = \left( \frac{1}{2 \pi i} \right) ^2  \frac{-1}{6}{\rm tr}({h^{-1}dh} )^3 \hspace{17em} $$
$$c _{2,2} = \left( \frac{1}{2 \pi i} \right) ^2  \frac{1}{2}{\rm tr}( dh_1 dh_2  h_2 ^{-1} h_1 ^{-1})- \left( \frac{1}{2 \pi i} \right) ^2  \frac{1}{2}{\rm tr}( h_1 ^{-1} dh_1){\rm tr}( h_2 ^{-1}dh_2  ).$$

\end{corollary}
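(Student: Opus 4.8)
The plan is to obtain $c_2$ from the second Chern character, already exhibited in the preceding Example, by means of the Newton identity relating power sums to elementary symmetric functions. Put $A=-\Omega/2\pi i$, so that $\mathrm{ch}_p=\tfrac{1}{p!}\mathrm{tr}(A^p)$ and the invariant polynomial representing $c_2$ is the second elementary symmetric function of the eigenvalues of $A$,
$$P_{c_2}(A)=\tfrac{1}{2}\bigl((\mathrm{tr}\,A)^2-\mathrm{tr}(A^2)\bigr).$$
By Dupont's construction $c_2$ is represented by $I_{\Delta}\!\left(P_{c_2}(\Omega)\right)$, and since $\tfrac12\mathrm{tr}(A^2)=\mathrm{ch}_2$, the term $-\tfrac12\mathrm{tr}(A^2)$ contributes exactly $-C_{1,3}-C_{2,2}$, i.e. the cochains of the Example with reversed sign. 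This already gives $c_{1,3}=-C_{1,3}$ and the first summand of $c_{2,2}$, so the task reduces to evaluating the contribution of $\tfrac12(\mathrm{tr}\,A)^2$.

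To compute that term I would first record the simplification $\mathrm{tr}\bigl((\theta_i-\theta_j)^2\bigr)=0$: for any matrix-valued $1$-form $\eta$ one has $\mathrm{tr}(\eta\wedge\eta)=\sum_{a,b}\eta_{ab}\wedge\eta_{ba}=-\sum_{a,b}\eta_{ab}\wedge\eta_{ba}$, hence it vanishes. Applied to the curvature formula of this section, this collapses $\mathrm{tr}\,\Omega$ on $\Delta^{p-q}\times N\bar{G}(p-q)$ to the pure $(1,1)$-form $-\sum_i dt_i\wedge\mathrm{tr}(\theta_0-\theta_i)$, so $(\mathrm{tr}\,\Omega)^2$ is a $(2,2)$-form. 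On $\Delta^1\times N\bar{G}(1)$ only one barycentric differential is available, so $(\mathrm{tr}\,\Omega)^2=0$ there; hence $\tfrac12(\mathrm{tr}\,A)^2$ contributes nothing in bidegree $(1,3)$, confirming $c_{1,3}=-C_{1,3}$. On $\Delta^2\times N\bar{G}(2)$ a short antisymmetrization gives
$$(\mathrm{tr}\,\Omega)^2=-2\,dt_1\wedge dt_2\wedge\mathrm{tr}(\theta_0-\theta_1)\wedge\mathrm{tr}(\theta_0-\theta_2).$$

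Next I would integrate over $\Delta^2$ and pull back by $\gamma^*$. The elementary integral $\int_{\Delta^2}dt_1\wedge dt_2=\tfrac12$ leaves $-\tfrac12(2\pi i)^{-2}\,\mathrm{tr}(\theta_0-\theta_1)\wedge\mathrm{tr}(\theta_0-\theta_2)$ on $N\bar{G}(2)$. Using $\gamma^*\mathrm{tr}\,\varphi_1=\mathrm{tr}(\theta_0-\theta_1)$ and $\gamma^*(\mathrm{tr}\,\varphi_1+\mathrm{tr}\,\varphi_2)=\mathrm{tr}(\theta_0-\theta_2)$, together with $\mathrm{tr}(\varphi_1)\wedge\mathrm{tr}(\varphi_1)=0$, this equals $\gamma^*$ of $-\tfrac12(2\pi i)^{-2}\,\mathrm{tr}(h_1^{-1}dh_1)\wedge\mathrm{tr}(h_2^{-1}dh_2)$. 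Because the square relating $\gamma^*$ and $I_{\Delta}$ commutes, the corresponding cochain on $NG(2)$ is precisely this last expression, which is the second summand of $c_{2,2}$; adding $-C_{2,2}$ reproduces the stated $c_{2,2}$ and completes the identification.

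The main obstacle I anticipate is sign and orientation bookkeeping rather than anything conceptual. One must transport the scalar $1$-forms $\mathrm{tr}(\theta_0-\theta_i)$ through the barycentric differentials with the correct Koszul signs, and --- crucially --- use the same orientation of $\Delta^2$ and the same normalization of $I_{\Delta}$ that produced $C_{2,2}$ in the Example, so that the $(\mathrm{tr}\,A)^2$ and $\mathrm{tr}(A^2)$ contributions combine with consistent signs. No separate check that the result is a cocycle is needed, since it is by construction the image under $w$ of an invariant polynomial; and the vanishing of all components outside bidegrees $(1,3)$ and $(2,2)$ follows because the range is $0\le q\le p-1$ with $p=2$ and because $\Omega$ vanishes on $\Delta^0\times N\bar{G}(0)$ by the Maurer--Cartan equation.
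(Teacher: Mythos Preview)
Your proposal is correct and is precisely the natural derivation the paper leaves implicit: the corollary is stated without proof, immediately after the $\mathrm{ch}_2$ example, and the intended argument is exactly the Newton identity $c_2=\tfrac12\,c_1^{\,2}-\mathrm{ch}_2$ applied at the level of the simplicial curvature, together with the observation that $\mathrm{tr}\,\Omega$ has no $(0,2)$ part. Your sign and integration checks are accurate, so there is nothing to add.
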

\begin{example}
The cocycle which represents the $3$rd  Chern character ${\rm ch}_3 $ in $ \Omega ^{6} (NG) $ is the sum of the following
$C_{1,5} , C_{2,4}$ and $C_{3,3}$:
$$
\begin{CD}
0 \\
@AA{d''}A \\
{ C_{1,5} \in {\Omega}^{5} (G )}@>{d'}>>{\Omega}^{5} (NG(2))\\
@.@AA{d''}A\\
@.{ C_{2,4} \in {\Omega}^{4} (NG(2))}@>{d'}>> {\Omega}^{4} (NG(3)) \\
@.@.@AA{d''}A\\
@.@.{ C_{3,3} \in {\Omega}^{3} (NG(3))}@>{d'}>> 0
\end{CD}
$$

$$C_{1,5} = \frac{1}{3!} \left( \frac{1}{2 \pi i} \right) ^3 \frac{1 }{10}{\rm tr}({h^{-1}dh} )^5  \hspace{18em} $$

$$    C_{2,4}= \frac{-1}{3!} \left( \frac{1}{2 \pi i} \right) ^3 (
\frac{1 }{2}{\rm tr}  (   dh_1 {h_1}^{-1}dh_1 {h_1}^{-1}dh_1 dh_2 {h_2}^{-1}{h_1}^{-1})  \hspace{7em}$$
 $$ + \frac{1 }{4}  {\rm tr}  ( dh_1 dh_2 {h_2}^{-1}{h_1}^{-1}dh_1 dh_2 {h_2}^{-1}{h_1}^{-1}) $$
$$\hspace{9em} + \frac{1 }{2} {\rm tr}   ( dh_1 dh_2 {h_2}^{-1}dh_2 {h_2}^{-1}dh_2 {h_2}^{-1}{h_1}^{-1}))  $$

$$  C_{3,3} =\frac{-1}{3!} \left( \frac{1}{2 \pi i} \right) ^3 (\frac{1 }{2}{\rm tr} ( dh_1 dh_2 dh_3 {h_3}^{-1}{h_2}^{-1}{h_1}^{-1} ) \hspace{10em}$$
$$  \hspace{10em} -\frac{1 }{2} {\rm tr} (dh_1 h_2 dh_3 {h_3}^{-1}{h_2}^{-1}  dh_2 {h_2}^{-1}{h_1}^{-1})). $$

\end{example}

\section{The Chern-Simons form }
\setcounter{equation}{0}

We briefly recall the notion of the Chern-Simons form in \cite{Chern}.

Let $\pi : E \rightarrow M$ be any principal $G$-bundle and  $\theta$, $\Omega$ denote its connection form and the curvature.
For any $ P \in {\rm  I}^{k} (G) $, we define the $(2k-1)$-form $TP( \theta )$ on $E$ as:\\
$$TP( \theta )  := k \int_{0}^{1} P(\theta \wedge { \phi }_t ^{k-1})dt. $$
Here ${ \phi }_t := t \Omega + \frac{1}{2}t(t-1) [ \theta , \theta ]$.
Then the equation $ d(TP( \theta )) = P( { \Omega }^k ) $ holds and $TP( \theta )$ is called the Chern-Simons form of $P( { \Omega }^k ) $.
When the bundle is flat, its curvature vanishes and hence $ d(TP( \theta )) = P( { \Omega }^k ) = 0 $. 

Now we put the simplicial connection into $TP$ and using the same argument in section 3, then we obtain the Chern-Simons form in $\Omega ^{2p-1}(N \bar{G}) $.
\begin{proposition}
The Chern-Simons form in ${\Omega}^{3}(N \overline{U(n)} )$ which corresponds to the second Chern class $c_2$ is the sum of the following 
$Tc_{0,3}$, $Tc_{1,2}$:
$$
\begin{CD}
0 \\
@AA{d''}A \\
Tc_{0,3} \in {\Omega}^{3} (U(n) )@>{d'}>>{\Omega}^{3} (N \overline{U(n)}(1))\\
@.@AA{d''}A\\
@.Tc_{1,2} \in {\Omega}^{2} (N \overline{U(n)}(1))@>{d'}>> {\Omega}^{2} (N \overline{U(n)}(2))
\end{CD}
$$
$$Tc_{0,3} = \left( \frac{1}{2 \pi i} \right) ^2  \frac{1}{6}{\rm tr}({g^{-1}dg} )^3 \hspace{15em} $$
$$ Tc_{1,2} = \left( \frac{1}{2 \pi i} \right) ^2   \left( \frac{1}{2}{\rm tr}( g_0 ^{-1} dg_0 g_1 ^{-1}dg_1  ) 
-  \frac{1}{2}{\rm tr}( g_0 ^{-1} dg_0 ){\rm tr}(g_1 ^{-1}dg_1  )\right).$$
\end{proposition}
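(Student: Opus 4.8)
The plan is to repeat the computation of Section~3, but with the Weil form $P(\Omega^{k})$ replaced by its transgression $TP(\theta)$: I feed Dupont's simplicial connection $\theta|_{\Delta^{p}\times N\bar{G}(p)}=\sum_{i}t_{i}\theta_{i}$ into $TP$ and then apply $I_{\Delta}$. First I fix the invariant polynomial. The second Chern class is represented by the polarized form $P(A,B)=\frac{1}{2}\left(\frac{1}{2\pi i}\right)^{2}\left(\mathrm{tr}(A)\,\mathrm{tr}(B)-\mathrm{tr}(AB)\right)$, so that for $k=2$ the Chern--Simons form reads $TP(\theta)=2\int_{0}^{1}P(\theta\wedge\phi_{s})\,ds$ with $\phi_{s}=s\Omega+\frac{1}{2}s(s-1)[\theta,\theta]$; I write $s$ for the transgression parameter to keep it distinct from the barycentric coordinates $t_{i}$. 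Since $TP(\theta)$ has total degree $3$, under $I_{\Delta}$ only the bidegrees $(0,3)$ and $(1,2)$ survive, and these are exactly the two components $Tc_{0,3}$ and $Tc_{1,2}$ in the statement.

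For $Tc_{0,3}$ I restrict to $N\bar{G}(0)=U(n)$, over the point $\Delta^{0}$, where the connection is the single Maurer--Cartan form $\theta_{0}=g^{-1}dg$. This form is flat, so $\Omega=0$ and $\phi_{s}=\frac{1}{2}s(s-1)[\theta_{0},\theta_{0}]=s(s-1)\theta_{0}^{2}$. Substituting into $P$ and using $\mathrm{tr}(\theta_{0}^{2})=0$ (the trace of the wedge square of an odd form vanishes by graded cyclicity) annihilates the $\mathrm{tr}(A)\mathrm{tr}(B)$ term, leaving $2\cdot\left(-\frac{1}{2}\right)\left(\frac{1}{2\pi i}\right)^{2}\mathrm{tr}(\theta_{0}^{3})\int_{0}^{1}s(s-1)\,ds$. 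The elementary integral $\int_{0}^{1}s(s-1)\,ds=-\frac{1}{6}$ then produces exactly $Tc_{0,3}=\left(\frac{1}{2\pi i}\right)^{2}\frac{1}{6}\mathrm{tr}(g^{-1}dg)^{3}$.

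For $Tc_{1,2}$ I restrict to $\Delta^{1}\times N\bar{G}(1)$, where $\theta=t_{0}\theta_{0}+t_{1}\theta_{1}$ and, exactly as in Section~3, $\Omega=dt_{1}\wedge(\theta_{1}-\theta_{0})-t_{0}t_{1}(\theta_{0}-\theta_{1})^{2}$. The $(1,2)$ component needs precisely one $dt_{1}$; since $\theta$ carries none, this $dt_{1}$ must come from $\phi_{s}$, and only the piece $s\,dt_{1}\wedge(\theta_{1}-\theta_{0})$ of $s\Omega$ contributes, so the relevant integrand is $2P\left(\theta,\,s\,dt_{1}\wedge(\theta_{1}-\theta_{0})\right)$. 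After pulling $dt_{1}$ to the front (keeping the sign from anticommuting $1$-forms) and setting $t_{0}=1-t_{1}$, the two traces collapse: using $\mathrm{tr}(\theta_{i}^{2})=0$ and $\mathrm{tr}(\theta_{1}\theta_{0})=-\mathrm{tr}(\theta_{0}\theta_{1})$ one gets $\mathrm{tr}(\theta)\,\mathrm{tr}(\theta_{1}-\theta_{0})=\mathrm{tr}(\theta_{0})\mathrm{tr}(\theta_{1})$ and $\mathrm{tr}\left(\theta(\theta_{1}-\theta_{0})\right)=\mathrm{tr}(\theta_{0}\theta_{1})$, both independent of $t_{1}$. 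Performing $\int_{0}^{1}s\,ds=\frac{1}{2}$ and $\int_{\Delta^{1}}dt_{1}=1$ then yields $\frac{1}{2}\left(\frac{1}{2\pi i}\right)^{2}\left(\mathrm{tr}(\theta_{0}\theta_{1})-\mathrm{tr}(\theta_{0})\mathrm{tr}(\theta_{1})\right)$, which is the asserted $Tc_{1,2}$ with $\theta_{0}=g_{0}^{-1}dg_{0}$ and $\theta_{1}=g_{1}^{-1}dg_{1}$.

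The step I expect to be the main obstacle is the sign and ordering bookkeeping in the $(1,2)$ computation: commuting $dt_{1}$ past the matrix-valued $1$-forms inside the trace, applying graded cyclicity of the trace for odd-degree forms, and fixing the orientation of $\int_{\Delta^{1}}$ so that the two traces emerge with the correct relative sign. The structural heart of the argument is the cancellation of all $t_{1}$-dependence before integration; once that is in hand, the simplex and transgression integrals are immediate.
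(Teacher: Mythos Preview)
Your proposal is correct and follows exactly the approach the paper indicates: feed Dupont's simplicial connection into the Chern--Simons transgression $TP(\theta)$ and apply $I_{\Delta}$, repeating the Section~3 computation. The paper itself offers no further detail beyond the sentence ``we put the simplicial connection into $TP$ and using the same argument in section~3'', so your explicit working-out of the $(0,3)$ and $(1,2)$ pieces, including the cancellation of the $t_{1}$-dependence before integration, is precisely what the paper leaves implicit.
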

\bigskip
\begin{remark}
The term 
$\left( \frac{1}{2 \pi i} \right) ^2 \frac{1}{2}{\rm tr}( g_0 ^{-1} dg_0 ){\rm tr}(g_1 ^{-1}dg_1  )$ vanishes when we restrict it to $SU(n)$.
\end{remark}

\section{Formulas for a cocycle in a truncated complex}
\setcounter{equation}{0}
In this section, we prove the conjecture due to Brylinski in \cite{Bry}.

At first, we introduce the filtered local simplicial de Rham complex.

\begin{definition}[\cite{Bry}]
The filtered local simplicial de Rham complex $F^p\Omega_{{\rm loc}} ^{*,*}(NG)$ over a simplicial manifold $NG$ is
defined as follows:\\
$$
F^p \Omega^{r,s} _{\rm loc}(NG)=\begin{cases}
 \underrightarrow{\rm lim}_{1 \in V \subset G^r} ~~\Omega ^s(V)  & ~{\rm if}~ s \ge p \\
0  &  {\rm otherwise}. 
\end{cases}
$$
\end{definition}

Let $F^p \Omega^{*}(NG)$ be a filtered complex 
$$
F^p \Omega^{r,s} (NG)=\begin{cases}
 \Omega ^s(NG(r))  & ~{\rm if}~ s \ge p \\
0  &  {\rm otherwise} 
\end{cases}
$$
and $[\sigma_{<p}\Omega ^{*}({NG})]$ a truncated complex
$$
[\sigma_{<p}\Omega ^{r,s}({NG})]=\begin{cases}
0  &  ~{\rm if}~ s \ge p  \\
\Omega ^s(NG(r))  & ~{\rm otherwise}. 
\end{cases}
$$
Then there is an exact sequence: 
$$0 \rightarrow F^p \Omega^{*}(NG) \rightarrow \Omega^{*}(NG) \rightarrow [\sigma_{<p}\Omega^{*}({NG}) ] \rightarrow 0$$
which induces a boundary map 
$$\beta:H^l(NG,[\sigma_{<p}\Omega_{\rm loc} ^{*}]) \rightarrow H^{l+1}(NG,[F^p \Omega^{*} _{\rm loc}]).$$

Let $\omega_1 + \cdots + \omega_p$, $\omega_{p-q} \in \Omega^{p+q}(NG(p-q))$
be the cocycle in $\Omega^{2p}(NG)$ which represents the $p$-th Chern character. By using this cocycle, Brylinski constructed a cochain $\eta$ in $[\sigma_{<p}\Omega ^{*} _{\rm loc}({NG})]$ in the following way.

We take a contractible open set $U \subset G$ containing $1$. Using the same argument in \cite[Lemma 9.7]{Dup2}, we can 
construct mappings $\{ \sigma_l:{ \Delta }^l\times  U^l \rightarrow U \}_{0 \le l}$ inductively with the following properties:\\
(1)~$\sigma_0(pt)=1$;\\
(2)
$$
{\sigma}_l({\varepsilon}^{j}(t_0, \cdots , t_{l-1});h_1, \cdots, h_l)=\begin{cases}
{\sigma}_{l-1}( t_0, \cdots, t_{l-1};{\varepsilon}_{j}(h_1, \cdots , h_l))   &  ~{\rm if}~j \ge 1 \\
h_1 \cdot {\sigma}_{l-1}(t_0, \cdots, t_{l-1};h_2, \cdots , h_l)   &  ~{\rm if}~j=0.
\end{cases}
$$
Then we define mappings $\{f_{m,q}: { \Delta }^q \times  U^{m+q-1} \rightarrow G^m \}$ by 
$$f_{m,q}(t_0, \cdots , t_q;h_1, \cdots, h_{m+q-1}):=(h_1, \cdots , h_{m-1}, {\sigma}_q(t_0, \cdots , t_q;h_m, \cdots , h_{m+q-1})).$$
We can check $f_{m,q} \circ {\varepsilon}_{j} = f_{m,q+1} \circ {\varepsilon}^{j-m+1}:{ \Delta }^q \times  U^{m+q} \rightarrow G^m $ 
if $m \le j \le m+q$ and $f_{m,q} \circ {\varepsilon}_{j} = {\varepsilon}_{j} \circ f_{m+1,q}$ if  $m-1 \ge j \ge 0$
and ${\varepsilon}_{m} \circ f_{m+1,q}  = f_{m,q+1}
\circ {\varepsilon}^0$ holds.

We define a $(2p-m-q)$-form $\beta_{m,q}$ on $U^{m+q-1}$ by $\beta_{m,q}=(-1)^m \int_{{ \Delta }^q}f_{m,q} ^* {\omega}_m$.
Then the cochain $\eta$ is defined as the sum of following $\eta_l$ on $U^{2p-1-l}$ for $0 \le l \le p-1$:
$$\eta _l:=\sum_{m+q=2p-l,~ m \ge 1}\beta _{m,q}.$$

Now we are ready to state the theorem whose statement is conjectured by Brylinski \cite{Bry}.

\begin{theorem}
$\eta := \eta _0+ \cdots + \eta_{p-1}$ is a cocycle in $[\sigma_{<p}\Omega_{\rm loc} ^{*}(NG)]$ whose cohomology class is mapped 
to $[\omega_1 + \cdots + \omega_p]$ in $H^{2p}(NG,[F^p \Omega^{*} _{\rm loc}])$ by a boundary map 
$\beta : H^{2p-1}(NG,[\sigma_{<p}\Omega_{\rm loc} ^{*}]) \rightarrow H^{2p}(NG,[F^p \Omega^{*} _{\rm loc}])$.
\end{theorem}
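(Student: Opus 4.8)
The plan is to compute the connecting homomorphism $\beta$ by the usual zig-zag and then identify the outcome with the Chern character cocycle. Recall that for a class $[\eta]\in H^{2p-1}(NG,[\sigma_{<p}\Omega^{*}_{\rm loc}])$ one computes $\beta[\eta]$ by choosing a lift $\tilde\eta$ of $\eta$ to $\Omega^{*}_{\rm loc}(NG)$, forming $(d'+d'')\tilde\eta$, observing that it lands in $F^p\Omega^{*}_{\rm loc}(NG)$ (precisely because $\eta$ is a cocycle in the quotient), and setting $\beta[\eta]=[(d'+d'')\tilde\eta]$. Thus the theorem reduces to producing a lift whose total differential is cohomologous to $\omega:=\omega_1+\cdots+\omega_p$ inside $F^p\Omega^{*}_{\rm loc}(NG)$. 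Throughout I write $D:=d'+d''$ and note that $D$ preserves $F^p$, since neither $d'$ nor $d''$ lowers the de Rham degree, while $d''$ raises it.

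The conceptual core is to recognize $\eta$ as the truncation of a single transgression cochain built from the local contraction $\{\sigma_l\}$ of \cite[Lemma 9.7]{Dup2}. Set $\Xi:=\sum_{m,q}\beta_{m,q}$, summed over all $m\ge 1,\ q\ge 0$, where $\beta_{m,q}=(-1)^m\int_{\Delta^q}f_{m,q}^{*}\omega_m$ (so $\omega_m=0$ for $m>p$, and $\beta_{m,q}$ has de Rham degree $2p-m-q$, which must be $\ge 0$). By construction $\Xi$ has total degree $2p-1$, Brylinski's cochain is exactly its part of de Rham degree $\le p-1$, i.e. $\eta=\sum_{2p-m-q\le p-1}\beta_{m,q}$, and $\zeta:=\Xi-\eta=\sum_{2p-m-q\ge p}\beta_{m,q}$ consists of forms of de Rham degree $\ge p$, hence $\zeta\in F^p\Omega^{2p-1}_{\rm loc}(NG)$. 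The master identity I aim to prove is $D\Xi=\omega$ (as germs of forms near $1\in G$). Granting it, the theorem follows formally with the lift $\tilde\eta:=\eta$: since $\zeta\in F^p$ and $D$ preserves $F^p$ we get $D\eta=D\Xi-D\zeta=\omega-D\zeta\in F^p$, so the $[\sigma_{<p}]$-component of $D\eta$ vanishes and $\eta$ is a cocycle; moreover $\beta[\eta]=[D\eta]=[\omega-D\zeta]=[\omega]$ in $H^{2p}(NG,[F^p\Omega^{*}_{\rm loc}])$, because $D\zeta$ is a coboundary there. So both assertions of the theorem are consequences of the single identity $D\Xi=\omega$.

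It remains to prove $D\Xi=\omega$, and this is where essentially all the work lies. The plan is a term-by-term analysis of $D\beta_{m,q}=d'\beta_{m,q}+d''\beta_{m,q}$ using three inputs. First, fibre integration (Stokes' theorem over $\Delta^q$) rewrites $d''\beta_{m,q}$ as an interior piece $\pm\int_{\Delta^q}f_{m,q}^{*}\,d\omega_m$ plus a sum of coface integrals $\pm\int_{\Delta^{q-1}}(f_{m,q}\circ\varepsilon^{k})^{*}\omega_m$. Second, the cocycle identity $D\omega=0$, in components $d''\omega_m=-d'\omega_{m-1}$, converts the interior piece into a $\beta_{m-1,q}$-type expression; here $\omega_0=0$ (the Chern character has no $0$-th piece) closes the recursion at $m=1$. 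Third, the three compatibilities recorded before the theorem, namely $f_{m,q}\circ\varepsilon_j=f_{m,q+1}\circ\varepsilon^{j-m+1}$ for $m\le j\le m+q$, $f_{m,q}\circ\varepsilon_j=\varepsilon_j\circ f_{m+1,q}$ for $0\le j\le m-1$, and $\varepsilon_m\circ f_{m+1,q}=f_{m,q+1}\circ\varepsilon^{0}$, are used to expand $d'\beta_{m,q}=\sum_j(-1)^j\varepsilon_j^{*}\beta_{m,q}$ and match its summands against the boundary faces produced by the first two inputs. The net effect is a large telescoping: the coface terms with $k\ge 1$ cancel the $d'$-summands with $m\le j\le m+q$; the $d'$-summands with $j\le m-1$ cancel the interior terms across neighbouring $m$; and the $k=0$ coface terms pair, via $\varepsilon_m\circ f_{m+1,q}=f_{m,q+1}\circ\varepsilon^{0}$, with terms of the adjacent slice. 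Only the contributions at the boundary of the index range survive, and these assemble into $\sum_m\omega_m$, the genuine $\omega_m$ emerging from the $q=0$ end, where $\partial\Delta^0=\emptyset$ and $\sigma_0\equiv 1$ collapses the last variable.

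The main obstacle is precisely this bookkeeping: tracking the signs contributed by the two differentials, the $(-1)^{p}$ in $d''$, and the Stokes orientation, and organizing the quadruply-indexed family of terms (indexed by $m$, $q$, and the two kinds of face index) so that the pairwise cancellation is transparent and the residue is seen to be $\omega$ with no spurious boundary terms. Special care is needed at the two ends of the range: at $m=1$, where one must invoke $\omega_0=0$ to kill what would otherwise be a leftover interior term, and at the $q=0$/top end, where the telescoping terminates and $\omega$ is produced. I would first verify the three compatibility identities and fix all sign conventions on the small case $p=2$, checking consistency against the explicit cocycle of Example 3.1 and Proposition 4.1, and only then carry out the general telescoping.
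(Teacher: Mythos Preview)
Your proposal is correct and follows essentially the same route as the paper: both arguments reduce to the identity $D\Xi=\omega$ (equivalently, the paper's level-wise identity $d'\eta_l+d''\eta_{l-1}=\omega_{2p-l}$ for all $0\le l\le 2p-1$, with $\eta_l$ tacitly extended beyond $l\le p-1$), proved via the three $f_{m,q}$-compatibilities, Stokes for $\int_{\Delta^q}$, and the cocycle relation $D\omega=0$. Your packaging via $\Xi=\eta+\zeta$ with $\zeta\in F^p$ makes the deduction of both assertions of the theorem a bit more transparent than the paper's terse ``it suffices to show'', but the computational content and the telescoping bookkeeping are identical.
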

\begin{proof}
To prove this, it suffices to show the equation below holds true for any $l$ which satisfies  $0 \le l \le 2p-1$ since $\omega_{2p-l}=0$ if $0 \le l \le p-1$:
$$\sum_{i=0} ^{2p-l}(-1)^i {\varepsilon}_{i} ^{*} \eta_l = (-1)^{2p-l+1}d \eta_{l-1} + \omega_{2p-l}.$$
The left side of this equation is equal to 
$$\sum_{m+q=2p-l,~ m \ge 1} (-1)^m \left( \int_{{ \Delta }^q} \sum_{i=0} ^{m-1}  (-1)^i(f_{m,q} \circ  {\varepsilon}_{i} )^* {\omega}_m  +  \int_{{ \Delta }^q} \sum_{i=m} ^{m+q} (-1)^i(f_{m,q} \circ  {\varepsilon}_{i} )^* {\omega}_m  \right).$$
We can check that 
$$\sum_{i=0} ^{m-1}(-1)^i (f_{m,q} \circ  {\varepsilon}_{i} )^* {\omega}_m = f_{m+1,q} ^* ( \sum_{i=0} ^{m-1} (-1)^i {\varepsilon}_{i} ^*  {\omega}_m)$$
hence by using the cocycle relation $\sum_{i=0} ^{m+1} (-1)^i {\varepsilon}_{i} ^*  {\omega}_m = (-1)^m d \omega_{m+1}$, we can see the following holds:

$$\int_{{ \Delta }^q} \sum_{i=0} ^{m-1} (-1)^i(f_{m,q} \circ  {\varepsilon}_{i} )^* {\omega}_m  
= \int_{{ \Delta }^q}(-1)^m d f_{m+1,q} ^* \omega _{m+1} \hspace{10em}$$
$$\hspace{5em} - \left( (-1)^m  \int_{{ \Delta }^q}( {\varepsilon}_{m}  \circ f_{m+1,q})^* {\omega}_m 
+ (-1)^{m+1}  \int_{{ \Delta }^q}( {\varepsilon}_{m+1}  \circ f_{m+1,q})^* {\omega}_m\right).$$

Note that $ \int_{{ \Delta }^q}( {\varepsilon}_{m+1}  \circ f_{m+1,q})^* {\omega}_m = 0 $ for $q \ge 1$ and
$\int_{{ \Delta }^q}( {\varepsilon}_{m+1}  \circ f_{m+1,q})^* {\omega}_m = \omega_{2p-l}$ if $q=0$.

We can also check that 
$$\int_{{ \Delta }^q} \sum_{i=m} ^{m+q} (-1)^i (f_{m,q} \circ  {\varepsilon}_{i} )^* {\omega}_m 
=  \int_{{ \Delta }^q} \sum_{i=m} ^{m+q} (-1)^i(f_{m,q+1} \circ  {\varepsilon}^{i-m+1} )^* {\omega}_m.  $$
We set $j=i-m+1$, then we see that $\int_{{ \Delta }^q} \sum_{i=m} ^{m+q} (-1)^i(f_{m,q+1} \circ  {\varepsilon}^{i-m+1} )^* {\omega}_m$
is equal to
 $$ 
\sum_{j=0} ^{q+1} \left( (-1)^{j+m-1} \int_{{ \Delta }^q}(f_{m,q+1} \circ  {\varepsilon}^{j} )^* {\omega}_m \right) - 
(-1)^{m-1} \int_{{ \Delta }^q}({\varepsilon}_{m} \circ f_{m+1,q})^* {\omega}_m$$
since ${\varepsilon}_{m} \circ f_{m+1,q}  = f_{m,q+1} \circ {\varepsilon}^0$.

From above, we can see that $\sum_{i=0} ^{2p-l}(-1)^i {\varepsilon}_{i} ^{*} \eta_l$ is equal to
$$\omega_{2p-l}+ \sum_{m+q=2p-l,~ m \ge 1} \left(  \int_{{ \Delta }^q}d f_{m+1,q} ^* \omega _{m+1}  + \sum_{j=0} ^{q+1} (-1)^{j-1} \int_{{ \Delta }^q}(f_{m,q+1} \circ  {\varepsilon}^{j} )^* {\omega}_m \right). $$
On the other hand, for any $(m',q')$ which satisfies $m'+q'=2p-(l-1)$ the following equation holds:
$$(-1)^{q'}d\int_{{ \Delta }^{q'}} f_{m',q'} ^* \omega _{m'}= \int_{{ \Delta }^{q'}} df_{m',q'} ^* \omega _{m'}
-\sum_{j=0} ^{q'} \int_{{ \Delta }^{q'-1}} (-1)^j {{\varepsilon}^{j}} ^* f_{m',q'} ^* \omega _{m'}.$$
Therefore $(-1)^{2p-l+1}d \eta_{l-1}$ is equal to 
$$ \sum_{m'+q'=2p-l+1,~ m' \ge 1}\left(\int_{{ \Delta }^{q'}} df_{m',q'} ^* \omega _{m'}
-\sum_{j=0} ^{q'} \int_{{ \Delta }^{q'-1}} (-1)^j {{\varepsilon}^{j}} ^* f_{m',q'} ^* \omega _{m'}\right).$$
This completes the proof.

\end{proof}

\begin{remark}
Let me explain Brylinski's motivation in \cite{Bry} to introduce these complexes 
and the conjecture briefly. Let $LU$ be the free loop group of a contractible open set $U \subset G$ containing $1$ and ${\rm ev}:LU \times  S^1 \to U$ be the evaluation map, i.e. for $\gamma \in LU$ and $\theta \in S^1$, ${\rm ev}(\gamma , \theta)$ is defined as $\gamma (\theta)$. Then $\int _{S^1} {\rm ev}^*$ maps $\eta_{1} \in \Omega ^1(U^{2p-2})$ to a cochain in  $\Omega ^0(LU^{2p-2})$.
This cochain defines a cohomology class in local cohomology group $H^{2p-2} _{\rm loc}(LU, {\mathbb C})$. Brylinski constructed a natural map from
$H^{2p-2} _{\rm loc}(LU, {\mathbb C})$ to the the Lie algebra cohomology $H^{2p-2}(L \mathcal{G}, {\mathbb C})$. Then as a special case
$p=2$, he used the cocycle in the local truncated complex $[\sigma_{<2}\Omega^{3} _{\rm loc}(NG)]$
to construct the standard Kac-Moody $2$-cocycle. 
He treated not only the free loop group but also the gauge group ${\rm Map}(X,G)$ for a compact oriented manifold $X$.
\end{remark}
{\bf Acknowledgments.} \\
I am indebted to Professor H. Moriyoshi for helpful discussion and good advice. I would like to
thank the referee for his/her several suggestions to improve this paper.


Graduate School of Mathematics, Nagoya University, Furo-cho, Chikusa-ku, Nagoya-shi, Aichi-ken, 464-8602, Japan. \\
e-mail: suzuki.naoya@c.mbox.nagoya-u.ac.jp
\end{document}